\newtheorem{thm}{Theorem}[section]
\theoremstyle{definition}
\newtheorem{cor}[thm]{Corollary}
\newtheorem{prop}[thm]{Proposition}
\newtheorem{defn}[thm]{Definition}
\newtheorem{lem}[thm]{Lemma}
\newtheorem{rem}[thm]{Remark}
\newtheorem{ex}[thm]{Example}
\numberwithin{equation}{section}
\begin{document}
\title[Nil-prime ideals of a commutative ring]
{Nil-prime ideals of a commutative ring}

\author[Faranak Farshadifar]%
{Faranak Farshadifar*}

\newcommand{\acr}{\newline\indent}
\address{\llap{*\,} Department of Mathematics Education, Farhangian University, P.O. Box 14665-889, Tehran, Iran.}
\email{f.farshadifar@cfu.ac.ir}

\subjclass[2010]{13C13, 13F99, 13C99}%
\keywords {Prime, $\mathfrak{N}$-prime, nil-prime, nil-maximal, nil-minimal, nil-principal}

\begin{abstract}
Let $R$ be a commutative ring with identity and $Nil(R)$ be the set of all nilpotent elements of $R$.
The aim of this paper is to introduce and study the notion of nil-prime ideals as a generalization of prime ideals.
We say that a proper ideal $P$ of $R$ is a \textit{nil-prime ideal} if there exists $x \in Nil(R)$ such that whenever $ab \in P$,  then $a\in P$ or $b \in P$ or $a+x \in P$ or $b+x \in P$ for each $a, b \in R$. Also, we introduce nil versions of some algebraic concepts in ring theory such as nil-maximal ideal, nil-minimal ideal, nil-principal ideal and investigate some nil-version of a well-known results about them.
\end{abstract}
\maketitle
\section{Introduction}
\noindent
Throughout this paper, $R$ will denote a commutative ring with
identity and $\Bbb Z$ will denote the ring of integers. Also, $Nil(R)$ will denote the set of all nilpotent elements of $R$.

A proper ideal $P$ of $R$ is said to be a \textit{prime ideal} if $ab \in P$ for some
$a, b \in R$, then either $a \in P$ or $b \in P $ \cite{MR3525784}.
Theory of prime ideals is an important tool in classical algebraic geometry.
In development of algebraic geometry, some generalizations
for the concept of prime ideals has arisen.
For example in \cite{MR4127389}, the authors  introduced and studied the notion of $S$-prime ideals in a commutative ring.
Let $S \subseteq R$ be a multiplicative set and $P$ an ideal
of $R$ disjoint with $S$. Then $P$ is said to be \textit{$S$-prime} if there exists an $s \in S$ such that for all
$a, b \in R$ with $ab \in P$, we have $sa \in P$ or $sb \in P$.
Motivated by $S$-prime ideals, it is natural to ask what is a nil-version of prime ideals? In this regard,
in \cite{SU18}, the authors introduced and investigated the notion of $\mathfrak{N}$-prime ideals as a generalization of prime ideals.
A proper ideal $P$ of $R$ is said to be a \textit{$\mathfrak{N}$-prime ideal} if $ab \in P$, for each
$a, b \in R$, then either $a \in P + Nil(R)$ or $b \in P + Nil(R)$ \cite{SU18}.

The aim of this paper is to introduce the notions of nil-prime ideals as a generalization of prime ideals and investigate some nil-versions of  well-known results about prime ideals.
We say that a proper ideal $P$ of $R$ is a \textit{nil-prime ideal} if there exists $x \in Nil(R)$ such that whenever $ab \in P$,  then $a \in P$ or $b \in P$ or $a+x \in P$ or $b+x \in P$ for each $a, b \in R$ (Definition \ref{1.1}).
It is shown that the class of nil-prime ideals is located properly between the class of prime ideas and the class of $\mathfrak{N}$-prime ideals. Also, we introduce nil versions of some algebraic concepts in ring theory such as nil-maximal ideal, nil-minimal ideal, nil-principal ideal, $\mathfrak{N}(R)$-integral domain, and $\mathfrak{N}(R)$-PID  that are needed in the sequel.

\section{Nil-prime ideals}
\begin{defn}\label{1.1}
We say that a proper ideal $P$ of $R$ is a \textit{nil-prime ideal} if there exists $x \in Nil(R)$ such that whenever $ab \in P$,  then $a \in P$ or $b \in P$ or $a+x \in P$ or $b+x \in P$ for each $a, b \in R$. In this case, $P$ is said to be \textit{nil-prime ideal with respect to} $x$.
\end{defn}

In the following examples and remarks, we can see that the  class of nil-prime ideals is located properly between the class of prime ideals and the class of $\mathfrak{N}$-prime ideals.

Let $n$ be a positive integer. Consider the ring $\Bbb Z_n$ of integers modulo $n$. We know that $\Bbb Z_n$ is a principal ideal ring and each of these ideals is generated by $\bar{m} \in \Bbb Z_n$, where $m$ is a factor of $n$. In this paper, we denote this ideal by $\langle m\rangle$.
\begin{rem}\label{1.2}
Clearly every prime ideal of $R$ is a nil-prime ideal of $R$. But the Example \ref{1.3} shows that the converse is not true in general. If $P$ is a nil-prime ideal of $R$ such that $Nil(R) \subseteq P$, then $P$ is a prime ideal of $R$. Therefore, if $Nil(R)=0$ (i.e., $R$ is reduced), then the notions of prime ideals and nil-prime ideals are equal. For example, if $n$ is square-free (i.e., $n$ has not a square factor), then $Nil(\Bbb Z_n)=0$. Also, $Nil(\Bbb Z)=0$ and $Nil(F[x])=0$,  where $F$ is a field.
\end{rem}

\begin{ex}\label{1.3}
Consider the ideal $\langle 0\rangle$ of the ring $\Bbb Z_8$ and $4  \in Nil(\Bbb Z_8)=\{2^k: k\ is\ a\ positive\ integer\}$. Since $(2)(4)=8\in \langle 0\rangle$, $2, 4 \not\in\langle 0\rangle$, we have $\langle 0\rangle$ is not a prime ideal of $\Bbb Z_8$. But $4+4=8 \in  \langle 0\rangle$. This in turn implies that $\langle 0\rangle$ is nil-prime ideal.
\end{ex}

\begin{rem}\label{1.6}
Clearly every nil-prime ideal of $R$ is a $\mathfrak{N}$-prime ideal of $R$. But the Example \ref{1.7} shows that the converse is not true in general.
\end{rem}

\begin{ex}\label{1.7}
Consider the ideal $P=\langle 16\rangle$ of the ring $R=\Bbb Z_{32}$. Then by \cite[Example 3.1]{SU18}, $P$ is a $\mathfrak{N}$-prime ideal of $R$.
But $P$ is not a nil-prime ideal of $R$. Because $(2)(8) \in P$ and $(4)(4) \in P$ but $2,4,8 \not \in P$ and there is not $x \in Nil(R)$ such that $4+x\in P$ and also $2+x\in P$ or $8+x \in P$.
Although, $2+14 \in P$, $8+8 \in P$, and $4+12 \in P$ for $14, 8, 12\in Nil(R)$.
\end{ex}

\begin{prop}\label{001.16}
Let $P$ be a nil-prime ideal of $R$. Then $\sqrt{P}$ is a prime ideal of $R$.
\end{prop}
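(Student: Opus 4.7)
The plan is to unpack the two definitions and verify the prime condition directly on $\sqrt{P}$. Suppose $ab \in \sqrt{P}$, so that $(ab)^n = a^n b^n \in P$ for some positive integer $n$. Since $P$ is nil-prime, there exists a fixed $x \in Nil(R)$ such that, applied to the pair $(a^n, b^n)$, one of the following holds: $a^n \in P$, $b^n \in P$, $a^n + x \in P$, or $b^n + x \in P$.

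The first two cases are immediate: $a^n \in P$ gives $a \in \sqrt{P}$, and similarly for $b$. The substantive step is handling the cases involving the nilpotent witness $x$. Suppose $a^n + x \in P$, and let $m$ be such that $x^m = 0$. Writing $a^n = p - x$ for some $p \in P$, the binomial expansion gives
\[
a^{nm} = (p - x)^m = \sum_{k=0}^{m} \binom{m}{k} p^k (-x)^{m-k}.
\]
Every term with $k \geq 1$ lies in $P$ because it contains the factor $p^k$, while the $k=0$ term equals $(-x)^m = \pm x^m = 0 \in P$. Hence $a^{nm} \in P$, so $a \in \sqrt{P}$. The case $b^n + x \in P$ is symmetric and yields $b \in \sqrt{P}$.

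Combining all four cases shows that $ab \in \sqrt{P}$ forces $a \in \sqrt{P}$ or $b \in \sqrt{P}$. Since $P$ is proper and $1 \notin P$ implies $1 \notin \sqrt{P}$, the ideal $\sqrt{P}$ is proper and therefore prime.

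The only nontrivial point is the binomial-expansion argument that absorbs the nilpotent $x$ into $P$; once that is in place, the rest is bookkeeping. I expect no further obstacle because the definition of nil-prime is tailored so that the extra perturbation by a nilpotent element vanishes after taking a sufficiently high power, which is exactly what $\sqrt{P}$ records.
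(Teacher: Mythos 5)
Your proof is correct and follows essentially the same route as the paper: reduce $ab\in\sqrt{P}$ to the nil-prime condition on $a^nb^n$ and then absorb the nilpotent witness $x$. The only difference is cosmetic — the paper absorbs $x$ by noting $P+Rx\subseteq\sqrt{P}$, whereas you make the same point explicit via the binomial expansion of $(p-x)^m$; both land on $a\in\sqrt{P}$ or $b\in\sqrt{P}$.
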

\begin{proof}
Let $ab \in \sqrt{P}$. Then $a^nb^n\in P$ for some positive integer $n$. Thus there exists $x \in  Nil(R)$ such that  $a^n \in P$ or $b^n \in P$ or $a^n+x \in P$ or $b^n+z \in P$. Hence,
$a^n \in P+Rx\subseteq \sqrt{P}$ or $b^n \in P+Rx\subseteq\sqrt{P}$. Therefore, $a\in \sqrt{P}$ or $b \in \sqrt{P}$, as needed.
\end{proof}

\begin{thm}\label{001.196}
Let $P$ be a nil-prime ideal of $R$ with respect to $x$. Then we have the following.
\begin{itemize}
\item [(a)] $2x\in P$.
\item [(b)] For each $a, b \in R\setminus P$ with $ab \in P$, we have $2a\in P$ or $2b\in P$.
\end{itemize}
\end{thm}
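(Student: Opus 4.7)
The plan is to establish (a) first, since (b) follows quickly from (a) together with the defining property of a nil-prime ideal.

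For part (a), the natural starting point is the nilpotency of $x$: there is some positive integer $n$ with $x^n = 0 \in P$. I would let $k$ denote the least positive integer with $x^k \in P$. If $k = 1$, then $x \in P$, so $2x = x + x \in P$ and we are done. Otherwise $k \geq 2$, and the minimality of $k$ gives $x \notin P$ and $x^{k-1} \notin P$. I would then apply Definition \ref{1.1} to the factorization $x \cdot x^{k-1} = x^k \in P$: the two ``membership'' alternatives are ruled out, leaving either $2x \in P$ (which is what we want) or $x^{k-1} + x \in P$.

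The main obstacle is this last case. The trick I would use is to multiply $x^{k-1} + x$ by $x$ and use that $P$ is an ideal: this puts $x^k + x^2$ in $P$, and subtracting $x^k \in P$ leaves $x^2 \in P$. By minimality of $k$, this forces $k = 2$, so in particular $x \notin P$ while $x^2 \in P$. Applying Definition \ref{1.1} once more to the factorization $x \cdot x = x^2 \in P$, both ``membership'' alternatives fail and both ``$+x$'' alternatives collapse to the same element, so the definition yields $x + x = 2x \in P$.

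For part (b), given $a, b \in R \setminus P$ with $ab \in P$, Definition \ref{1.1} immediately forces $a + x \in P$ or $b + x \in P$, since the first two alternatives are excluded by hypothesis. In the first subcase, doubling gives $2a + 2x \in P$, and subtracting $2x$, which lies in $P$ by part (a), leaves $2a \in P$; the second subcase is symmetric, giving $2b \in P$. This completes (b).
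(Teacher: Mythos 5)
Your proof is correct and follows essentially the same route as the paper: factor $x^k = x\cdot x^{k-1}$, use the ideal property to extract $x^2\in P$ from the case $x^{k-1}+x\in P$, then apply the definition to $x\cdot x$, and deduce (b) from (a) by cancelling $2x$. The only difference is cosmetic but welcome: your choice of the \emph{least} $k$ with $x^k\in P$ rules out the alternative $x^{k-1}\in P$ outright, replacing the paper's informal ``by continuing in this way'' descent with a clean one-step argument.
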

\begin{proof}
(a) Since $x \in  Nil(R)$, there exists $n\in \Bbb N$ such that $x^n=0$. If $x \in P$, we are done. So suppose that $x \not \in P$. Then as $(x)(x^{n-1})=x^n=0 \in P$ we have $2x \in P$ or $x^{n-1} \in P$ or $x^{n-1}+x \in P$. If $2x \in P$, then we are done. If $x^{n-1}+x \in P$, then $x^2=0+x^2=x(x^{n-1}+x) \in P$. This implies that $2x\in P$. If $x^{n-1} \in P$, then by continueing in this way, we get that $2x\in P$.

(b) Let $a, b \in R\setminus P$ with $ab \in P$. Then $a+x \in P$ or $b +x\in P$. Without loss of generality, assume that $a+x \in P$. Then
  $a-x+(x+x)=-x+x+a+x \in P$. It follows that $a-x\in P$ since $x+x\in P$ by part (a). Therefore, $2a=a+x+a-x\in P$.
\end{proof}

\begin{defn}\label{1.4}
\begin{itemize}
\item [(a)] We say that a proper ideal $M$ of $R$ is a \textit{nil-maximal ideal} if there exists $x \in Nil(R)$ and whenever $M\subseteq I\subseteq R$,  then $I= M$ or $I= M+Rx$ or $I=R$.
\item [(b)] We say that a proper ideal $M$ of $R$ is a \textit{$\mathfrak{N}$-maximal ideal} if whenever $M\subseteq I\subseteq R$,  then $I+Nil(R)=R$ or  $I\subseteq M+Nil(R)$.
\end{itemize}
\end{defn}
Clearly, every nil-maximal ideal of $R$ is a $\mathfrak{N}$-maximal ideal of $R$. But the Example \ref{11.192}, shows that the converse is not true in general.

\begin{thm}\label{1.5}
Let $P$ be a nil-maximal ideal of $R$. Then $P$ is a $\mathfrak{N}$-prime ideal of $R$.
\end{thm}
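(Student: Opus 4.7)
The plan is to mimic the classical proof that maximal ideals are prime, using the nil-maximal condition. Let $x \in \mathrm{Nil}(R)$ be the nilpotent element witnessing the nil-maximality of $P$. Pick $a, b \in R$ with $ab \in P$. I intend to show that if $a \notin P + \mathrm{Nil}(R)$, then $b \in P + \mathrm{Nil}(R)$.

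First I would form the auxiliary ideal $I = P + Ra$. Because $P \subseteq I \subseteq R$, the nil-maximal hypothesis forces one of the three alternatives $I = P$, $I = P + Rx$, or $I = R$. The first alternative gives $a \in P \subseteq P + \mathrm{Nil}(R)$, contradicting our assumption. The second alternative lets us write $a = p + rx$ for some $p \in P$ and $r \in R$; since $x$ is nilpotent and $\mathrm{Nil}(R)$ is an ideal, $rx \in \mathrm{Nil}(R)$, so again $a \in P + \mathrm{Nil}(R)$, contradiction. Hence we must have $I = R$.

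From $I = R$ the argument is routine: write $1 = p + ra$ with $p \in P$, $r \in R$, and multiply by $b$ to get $b = pb + r(ab) \in P \subseteq P + \mathrm{Nil}(R)$, which is exactly what is needed to conclude that $P$ is $\mathfrak{N}$-prime.

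The only genuinely non-routine point is spotting that the middle alternative $I = P + Rx$ should be absorbed into $P + \mathrm{Nil}(R)$ via the fact that $Rx \subseteq \mathrm{Nil}(R)$; once that observation is made, the rest reduces to the textbook proof that maximal ideals are prime. I expect no substantive obstacle beyond this.
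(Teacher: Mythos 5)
Your proof is correct and follows essentially the same route as the paper: form the auxiliary ideal $P+Ra$, apply the nil-maximality trichotomy, absorb the case $P+Ra=P+Rx$ into $P+\mathrm{Nil}(R)$ via $Rx\subseteq \mathrm{Nil}(R)$, and fall back on the classical argument when $P+Ra=R$. The only cosmetic difference is that the paper runs the trichotomy on both $P+Ra$ and $P+Rb$ simultaneously and rules out the case where both equal $R$, whereas you apply it once to $P+Ra$ and then conclude $b\in P$ directly; the underlying ideas are identical.
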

\begin{proof}
Suppose that $ab \in P$. Since $P\subseteq Ra+P\subseteq R$ and $P$ is a nil-maximal ideal of $R$, we have there exists $x \in Nil(R)$ such that $Ra+P=P$ or  $Ra+P=P+Rx$ or  $Ra+P=R$.
Also, for $x \in Nil(R)$, we have $Rb+P=P$ or  $Rb+P=P+Rx$ or  $Rb+P=R$. If $Ra+P=P$ or  $Rb+P=P$, we are done.
If $Rb+P=R$ and $Ra+P=R$, then
$$
R=P+Ra=P+(P+Rb)a=P+Pa+Rab=P.
$$
This is a contradiction because $P$ is proper. So, we can suppose that $Ra+P=P+Rx$.  Thus $a=a+0\in Ra+P\subseteq P+Rx$. It follows that  $a\in  P+Nil(R)$, as needed.
\end{proof}

The following example shows that the converse of Theorem \ref{1.5} is not true in general.
\begin{ex}\label{1.192} Let $R=\Bbb Z_8[X,Y]$ and $P=\langle \bar{4}XY\rangle$.
Then $P$ is a $\mathfrak{N}$-prime ideal of $R$ (see \cite[Example 2.2 (ii)]{SU18}). But since $P \subset \langle X\rangle \subset R$ and  $\langle X\rangle \not = P+Rt\subseteq P+Nil(R)=Nil(R)=\bar{2}\Bbb Z_8[X,Y]$ for each $t \in Nil(R)$ we have $P$ is not a nil-maximal ideal of $R$.
\end{ex}

\begin{thm}\label{1.12}
Let $f:  R \rightarrow S$ be an epimorphism and $P$ be a nil-prime
ideal of $R$ such that $Ker(f) \subseteq P$. Then $f(P)$ is a nil-prime ideal of $S$.
\end{thm}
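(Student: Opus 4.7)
The plan is to transport the nil-prime witness from $R$ to $S$ along $f$. Specifically, if $P$ is nil-prime with respect to $x \in Nil(R)$, I would propose that $f(P)$ is nil-prime with respect to $y:=f(x)$. The nilpotency of $y$ is immediate: if $x^n=0$, then $y^n=f(x)^n=f(x^n)=0$, so $y \in Nil(S)$.

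First I would verify that $f(P)$ is a proper ideal. Surjectivity of $f$ makes $f(P)$ an ideal of $S$, and if $f(P)=S$ then some $p \in P$ satisfies $f(p)=1_S$, whence $1_R-p \in Ker(f)\subseteq P$, forcing $1_R \in P$ and contradicting the properness of $P$. This use of the hypothesis $Ker(f)\subseteq P$ is the same observation that drives the main step below.

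Next, given $c,d \in S$ with $cd \in f(P)$, I would use surjectivity to write $c=f(a)$ and $d=f(b)$ for some $a,b \in R$. The key reduction is to show $ab \in P$. Since $f(ab)=cd \in f(P)$, we have $ab \in f^{-1}(f(P))$, and the standard identity $f^{-1}(f(P))=P+Ker(f)$ together with $Ker(f)\subseteq P$ gives $f^{-1}(f(P))=P$. Hence $ab \in P$.

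Finally I would apply the nil-prime property of $P$ with respect to $x$ to obtain one of the four alternatives $a \in P$, $b \in P$, $a+x \in P$, $b+x \in P$, and push each through $f$ to conclude $c \in f(P)$, $d \in f(P)$, $c+y \in f(P)$, or $d+y \in f(P)$ respectively, which is exactly the nil-prime condition for $f(P)$ with respect to $y$. No step looks like a real obstacle; the only subtlety worth being careful about is the equality $f^{-1}(f(P))=P+Ker(f)$, since without the assumption $Ker(f)\subseteq P$ the proof would collapse.
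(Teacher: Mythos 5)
Your proposal is correct and follows essentially the same route as the paper: pull $c,d$ back to $a,b\in R$ by surjectivity, use $f^{-1}(f(P))=P+Ker(f)=P$ to get $ab\in P$, apply the nil-prime property of $P$, and push the four alternatives (and the nilpotent witness) forward through $f$. Your treatment is in fact slightly more careful than the paper's, since you explicitly verify that $f(P)$ is proper and fix the witness $y=f(x)$ up front, but these are cosmetic refinements of the same argument.
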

\begin{proof}
Clearly, $f(P)\not=S$. Assume that  $ab \in  f(P) $ for some $a,b \in S$. As $f$ is an epimorphism, we have $a=f(x)$ and $b=f(y)$ for some $x, y \in R$. Thus we have $xy \in f^{-1}(f(P))=P$. As $P$ is a nil-prime ideal of $R$, there exists $t \in Nil(R)$
such that $x \in P$ or $y \in P$ or $x+t \in P$ or $y+t \in P$. Thus $f(x)\in f(P)$ or $f(y)\in f(P)$ or $f(x)+f(t)\in f(P)$ or $f(y)+f(t)\in f(P)$.
Now since $f(t) \in Nil(S)$, $f(P)$ is a nil-prime ideal of $S$.
\end{proof}

The following corollary is now evident.
\begin{cor}\label{1.13}
If $P$ is a nil-prime ideal of $R$ that contains an ideal $I$, then $P/I$ is
a nil-prime ideal of $R/I$.
\end{cor}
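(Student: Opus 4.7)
The plan is to derive this corollary as an immediate application of Theorem \ref{1.12}. Let $\pi : R \to R/I$ be the canonical projection sending $r \mapsto r+I$. This is a ring epimorphism whose kernel is exactly $I$, and by hypothesis $\mathrm{Ker}(\pi) = I \subseteq P$, so all hypotheses of Theorem \ref{1.12} are satisfied with $S = R/I$ and $f = \pi$.

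Applying Theorem \ref{1.12} directly, $\pi(P)$ is a nil-prime ideal of $R/I$. It then suffices to identify $\pi(P)$ with $P/I$, which is standard: since $I \subseteq P$, the image $\pi(P) = \{p + I : p \in P\}$ coincides with the quotient ideal $P/I$ of $R/I$.

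There is no real obstacle here; the content lies in Theorem \ref{1.12}, and the corollary is only its specialization to the quotient map. The one thing worth noting in writing it up is that the nilpotent witness $t + I \in \mathrm{Nil}(R/I)$ for $P/I$ in $R/I$ arises directly from the nilpotent witness $t \in \mathrm{Nil}(R)$ for $P$ in $R$ via the construction in the proof of Theorem \ref{1.12}, so no extra verification is needed.
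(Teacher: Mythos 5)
Your proposal is correct and is exactly the argument the paper intends: the paper states the corollary is ``now evident'' from Theorem \ref{1.12}, and you supply precisely that specialization to the canonical projection $\pi: R \to R/I$ with $\mathrm{Ker}(\pi) = I \subseteq P$ and $\pi(P) = P/I$. No discrepancy.
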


\begin{prop}\label{1.14}
Let $P$ be a proper ideal of $R$. If $\langle P, X\rangle$ is a nil-prime ideal of $R[X]$, then $P$ is a nil-prime ideal of $R$.
\end{prop}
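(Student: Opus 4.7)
The plan is to use the surjection $R[X]\to R$ given by evaluation at $X=0$, which has kernel $XR[X]$, and to exploit the fact that $\langle P,X\rangle$ is exactly the preimage of $P$ under this map, or equivalently, the kernel of the composition $R[X]\to R/P$ sending $X\mapsto 0$. In particular, for any $g(X)\in R[X]$, one has $g(X)\in\langle P,X\rangle$ if and only if its constant term $g(0)$ lies in $P$.

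First I would fix, by hypothesis, an element $f(X)\in Nil(R[X])$ with respect to which $\langle P,X\rangle$ is nil-prime, and set $x:=f(0)$. Since $f(X)$ is nilpotent and evaluation at $0$ is a ring homomorphism, $x\in Nil(R)$. I would also note that $P$ is proper in $R$: if $1\in P$, then $1\in\langle P,X\rangle$, contradicting that $\langle P,X\rangle$ is nil-prime (hence proper).

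Now take arbitrary $a,b\in R$ with $ab\in P$. Then $ab\in\langle P,X\rangle$, so by the nil-prime property in $R[X]$, one of the four elements $a,\ b,\ a+f(X),\ b+f(X)$ lies in $\langle P,X\rangle$. Applying the membership criterion above (i.e.\ passing to constant terms), these four cases give respectively $a\in P$, $b\in P$, $a+x\in P$, or $b+x\in P$. This is exactly the nil-prime property for $P$ in $R$ with respect to $x$, so $P$ is a nil-prime ideal of $R$.

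The proof is essentially routine; the only slightly subtle points are observing that the same nilpotent $x=f(0)$ works uniformly for all pairs $a,b$ (because the same $f$ witnesses the nil-prime property of $\langle P,X\rangle$), and the characterization of membership in $\langle P,X\rangle$ via the constant term, which I would state explicitly before running the case analysis. No deeper obstacle arises.
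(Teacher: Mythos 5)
Your proof is correct and follows essentially the same route as the paper: both rest on the evaluation homomorphism $R[X]\to R$ at $X=0$ and the observation that $g(X)\in\langle P,X\rangle$ exactly when $g(0)\in P$. The only difference is that the paper finishes by citing its Theorem \ref{1.12} (the image of a nil-prime ideal under an epimorphism whose kernel it contains is nil-prime), whereas you carry out that verification by hand.
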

\begin{proof}
Consider the homomorphism $\phi: R[X] \rightarrow R$ defined by
 $\phi (f(X)) = f(0)$. Clearly, $ Ker(\phi) = \langle X\rangle \subseteq  \langle P, X\rangle$ and $\phi$ is an epimorphism. As $\langle P, X\rangle$ is a nil-prime ideal of $R[X]$, we have $\phi ( \langle P, X\rangle)=P$ is a nil-prime ideal of $R$ by Theorem \ref{1.12}.
\end{proof}

Let $R_1$, $R_2$ be two commutative rings. Then $R = R_1\times R_2$ becomes
a commutative ring under componentwise addition and multiplication. In addition,
every ideal $I$ of $R$ has the form $I_1 \times I_2$, where $I_i$ is an ideal of $R_i$ for $i = 1, 2$.

\begin{lem}\label{1.10}
Let $R = R_1\times R_2$ and $P = P_1\times P_2$,  where $P_i$ is an ideal of $R_i$ for
$i = 1, 2$. Then the followings are equivalent:
\begin{itemize}
\item [(a)] $P$ is a nil-prime ideal of $R$;
\item [(b)] $P_1$ is a nil-prime ideal of $R_1$ and $P_2 = R_2$ or $P_1 = R_1$ and $P_2$ is a nil-prime
ideal of $R_2$.
\end{itemize}
\end{lem}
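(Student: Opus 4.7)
The proof splits naturally into the two directions of the equivalence, and the main technical tool I would use is the classical fact that $1+n$ is a unit whenever $n$ is nilpotent.

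For the direction (b) $\Rightarrow$ (a), I would take $x_1 \in Nil(R_1)$ witnessing nil-primeness of $P_1$ (in the case $P_2=R_2$; the other case is symmetric) and propose $x := (x_1,0) \in Nil(R)$ as a witness for $P$. Given $(a_1,a_2)(b_1,b_2) \in P_1 \times R_2$, only the first coordinates carry information, so the nil-primeness of $P_1$ applied to $a_1 b_1 \in P_1$ yields exactly one of the four required alternatives for $P$, since membership of $(c_1,c_2)$ in $P_1\times R_2$ depends only on $c_1$. This is a direct unwinding of definitions.

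For the harder direction (a) $\Rightarrow$ (b), the key trick is to test the relation $(1,0)\cdot(0,1)=(0,0) \in P$. Let $x=(x_1,x_2) \in Nil(R)$ be a witness (note $x_i \in Nil(R_i)$). The four alternatives become: $1\in P_1$, or $1\in P_2$, or $1+x_1 \in P_1$, or $1+x_2\in P_2$. The first two immediately give $P_1=R_1$ or $P_2=R_2$. The last two do the same because $1+x_i$ is a unit of $R_i$ (as $x_i$ is nilpotent), so $1+x_i \in P_i$ forces $P_i=R_i$. In every case, one factor is the whole ring, and since $P$ is proper the other factor is a proper ideal.

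Assume without loss of generality that $P_2=R_2$ and $P_1 \subsetneq R_1$; it remains to show $P_1$ is nil-prime in $R_1$. If $ab \in P_1$ for $a,b\in R_1$, then $(a,0)(b,0)=(ab,0)\in P$, so by nil-primeness of $P$ with the same witness $x=(x_1,x_2)$, one of $(a,0),(b,0),(a+x_1,x_2),(b+x_1,x_2)$ lies in $P_1\times R_2$. Reading off first coordinates gives $a\in P_1$, $b\in P_1$, $a+x_1\in P_1$, or $b+x_1\in P_1$, so $P_1$ is nil-prime with respect to $x_1\in Nil(R_1)$. The main obstacle is really just spotting that the witness must live in $Nil(R_1)\times Nil(R_2)$ and exploiting the unit-ness of $1+x_i$; once that observation is made, the argument is essentially bookkeeping.
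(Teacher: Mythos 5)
Your proof is correct, but in the direction (a) $\Rightarrow$ (b) you take a genuinely different route from the paper at the step where one factor must equal the whole ring. The paper invokes its Proposition \ref{001.16} (the radical of a nil-prime ideal is prime), writes $\sqrt{P}=\sqrt{P_1}\times\sqrt{P_2}$, and then cites an external result on prime ideals in direct products to conclude $\sqrt{P_1}=R_1$ or $\sqrt{P_2}=R_2$, hence $P_1=R_1$ or $P_2=R_2$. You instead test the single relation $(1,0)(0,1)=(0,0)\in P$ against a fixed witness $(x_1,x_2)\in Nil(R_1)\times Nil(R_2)$ and observe that each of the four alternatives forces $1\in P_i$ or $1+x_i\in P_i$ for some $i$, which collapses to $P_i=R_i$ because $1+x_i$ is a unit when $x_i$ is nilpotent. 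Your argument is more elementary and self-contained (no radical machinery, no citation), while the paper's reduction to the prime case reuses infrastructure it has already built. The remainder of both proofs --- checking that the surviving factor $P_j$ is nil-prime by testing elements of the form $(a,0)$ or $(0,a)$, and the converse direction with witness $(x_1,0)$ --- is essentially identical, and your explicit remarks that $Nil(R_1\times R_2)=Nil(R_1)\times Nil(R_2)$ and that the witness is fixed uniformly are correct and worth keeping.
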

\begin{proof}
$(a)\Rightarrow (b)$.
By Proposition \ref{001.16},
 $\sqrt{P}=\sqrt{P_1}\times \sqrt{P_2}$ is a prime ideal. Therefore, we have either $\sqrt{P_1}= R_1$ or $\sqrt{P_2 }= R_2$ by \cite[Theorem 6]{MR2454978}. This implies that  $P_1= R_1$ or $P_2= R_2$. So we can assume that $P_1 = R_1$. Now we prove that $P_2$ is a
nil-prime ideal of $R_2$. So suppose that  $a_2b_2\in P_2$ for some $a_2, b_2 \in R_2$. Then there exists $(x_1,x_2)\in Nil(R_1 \times  R_2)$ such that $(0, a_2)(0, b_2) = (0, a_2b_2) \in  P$ implies that $(0, a_2) \in  P$ or $(0, b_2)\in P$. or 
$(0, a_2) +(x_1, x_2)\in  P$ or $(0, b_2)+ (x_1, x_2)\in P$. Thus  $a_2 \in P_2$ or $b_2 \in P_2$ or $a_2+x_2 \in P_2$ or $b_2+x_2 \in P_2$, as needed.

$(b)\Rightarrow (a)$.
Assume that $P = P_1\times R_2$,  where $P_1$ is a nil-prime ideal of $R_1$. We show that $P$ is a nil-prime ideal of $R$. So let
$(a_1,a_2)(b_1,b_2) \in P_1\times R_2$. Then $a_1b_1 \in P_1$. Hence there exists $x_1 \in Nil(R_1)$ such that $a_1 \in P_1$ or $b_1 \in P_1$ or 
$a_1+x_1 \in P_1$ or $b_1+x_1 \in P_1$. This implies that $(a_1,a_2) \in P_1\times R_2$ or $(b_1,b_2) \in P_1\times R_2$ or  $(a_1,a_2)+(x_1,0) \in P_1\times R_2$ or $(b_1,b_2)+(x_1,0) \in P_1\times R_2$ for $(x_1, 0) \in Nil(R_1\times R_2)$. Thus $P$ is a nil-prime ideal of $R$.
\end{proof}

\begin{thm}\label{1.11}
Let $R=R_1\times R_2\times...\times R_n$, where $n \geq 2$, and
$P = P_1\times P_2\times...\times P_n$,
where $P_i$ is an ideal of $R_i$, $1 \leq i \leq n$. Then the followings are equivalent:
\begin{itemize}
\item [(a)] $P$ is a nil-prime ideal of $R$;
\item [(b)] $P_j$ is a nil-prime ideal of $R_j$ for some $j \in \{1,2,...,n\}$ and $P_i = R_i$ for
each $i \not= j$.
\end{itemize}
\end{thm}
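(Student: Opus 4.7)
The plan is to prove Theorem \ref{1.11} by induction on $n \geq 2$, with Lemma \ref{1.10} serving as the base case $n = 2$. This is the natural strategy since the $n$-fold product decomposes as a $2$-fold product of a single factor with an $(n-1)$-fold product, allowing the induction to consume one factor at a time.

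For the inductive step, fix $n \geq 3$ and assume the result for products of $n-1$ rings. Write $R = R_1 \times S$, where $S := R_2 \times \cdots \times R_n$, and write $P = P_1 \times Q$, where $Q := P_2 \times \cdots \times P_n$ is an ideal of $S$. By Lemma \ref{1.10} applied to the decomposition $R = R_1 \times S$, the ideal $P = P_1 \times Q$ is nil-prime in $R$ if and only if one of the following holds:
\begin{itemize}
\item[(i)] $P_1$ is a nil-prime ideal of $R_1$ and $Q = S$; or
\item[(ii)] $P_1 = R_1$ and $Q$ is a nil-prime ideal of $S$.
\end{itemize}

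In case (i), $Q = S$ forces $P_i = R_i$ for every $i \in \{2,\ldots,n\}$, which matches the right-hand side of (b) with $j = 1$. In case (ii), the inductive hypothesis applied to $S = R_2 \times \cdots \times R_n$ (a product of $n-1 \geq 2$ rings) says that $Q = P_2 \times \cdots \times P_n$ is nil-prime in $S$ if and only if there exists $j \in \{2,\ldots,n\}$ such that $P_j$ is nil-prime in $R_j$ and $P_i = R_i$ for each $i \in \{2,\ldots,n\} \setminus \{j\}$; combined with $P_1 = R_1$, this gives exactly the condition in (b). Conversely, given a decomposition satisfying (b), one checks immediately that it falls into case (i) (when $j=1$) or case (ii) (when $j \geq 2$, again using the inductive hypothesis), so Lemma \ref{1.10} returns that $P$ is nil-prime in $R$.

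I do not expect any serious obstacle: Lemma \ref{1.10} already isolates the key structural phenomenon (that exactly one coordinate can carry a proper nil-prime while all others must be the full ring), and the induction is just a bookkeeping device to reduce the $n$-fold product to a $2$-fold product. The only point that deserves mild care is ensuring that the inductive hypothesis applies when $n-1 = 2$, which is covered directly by Lemma \ref{1.10}, so the induction starts cleanly at $n = 2$ and proceeds without gaps.
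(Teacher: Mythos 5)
Your proposal is correct and is essentially the paper's own argument: induction on $n$ with Lemma \ref{1.10} handling both the base case and the inductive step via a two-fold decomposition of the product. The only cosmetic difference is that you split off the first factor ($R = R_1 \times (R_2\times\cdots\times R_n)$) while the paper splits off the last ($R = (R_1\times\cdots\times R_{n-1}) \times R_n$), which changes nothing of substance.
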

\begin{proof}
We use induction on $n$. By Lemma \ref{1.10}, the claim is true if $n = 2$. So, suppose that the claim is true for each $k  \leq n-1$ and let $k = n$. Put $Q= P_1\times P_2\times...\times P_{n-1}$,
and $\acute{R} = R_1\times R_2\times...\times R_{n-1}$, by Lemma \ref{1.10}, $P = Q\times P_n$ is a nil-prime ideal of
$R = \acute{R} \times R_n$ if and only if $Q$ is a nil-prime ideal of $\acute{R}$ and $P_n = R_n$ or $Q=\acute{R}$ and
$P_n$ is a nil-prime ideal of $R_n$. Now the rest follows from induction hypothesis.
\end{proof}

Let $M$ be an $R$-module and $R \oplus M = \{(a,m) : a \in R, m \in  M\}$. Then $R \oplus M$, \textit{idealization of $M$}, is a commutative ring with componentwise
addition and the multiplication: $(a,m_1)(b,m_2) = (ab,am_2 + bm_1)$ \cite{H988}.
If $P$ is an ideal of $R$ and $N$ is a submodule of $M$, then $P \oplus N$ is an ideal of $R\oplus M$ if
and only if $PM \subseteq  N$. Then $P \oplus N$ is called a \textit{homogeneous ideal}. In \cite{AW09}, it was
shown that $Nil(R \oplus M) = Nil(R) \oplus M$ and then all prime ideals $P$ of $R \oplus M$ are of
the form $P = P_1 \oplus M$, where $P_1$ is a prime ideal of $R$.

\begin{prop}\label{001.199996}
Let $M$ be an $R$-module, $P$ an ideal of $R$, and let $N$ be
a proper submodule of $M$ such that $PM \subseteq N$. If $P \oplus N$ is a nil-prime ideal of $R \oplus M$
with respect to $(x,m)$, then we have the following.
\begin{itemize}
\item [(a)] $m\in M\setminus N$, $x \in P$, and $2m\in N$.
\item [(b)] For each $m_1 \in M\setminus N$, we have $2m_1\in N$.
\end{itemize}
\end{prop}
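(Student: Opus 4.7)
The plan is to repeatedly apply the nil-prime condition to trivial squares $(0, m')(0, m') = (0, 0) \in P \oplus N$ for suitable $m' \in M$, together with one crucial structural observation: the witness $(x, m)$ cannot itself lie in $P \oplus N$. Indeed, if $(x, m) \in P \oplus N$, then translation by $(x, m)$ preserves $P \oplus N$, so the nil-prime condition collapses to the ordinary prime condition; but the characterization recalled just before the statement forces every prime ideal of $R \oplus M$ to have the form $P_1 \oplus M$, which is incompatible with $N$ being a proper submodule.

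For part (a), I would apply the nil-prime condition to $(0, m)(0, m) = (0, 0) \in P \oplus N$. This yields $(0, m) \in P \oplus N$ or $(x, 2m) = (0, m) + (x, m) \in P \oplus N$; equivalently, either $m \in N$, or both $x \in P$ and $2m \in N$. Suppose toward a contradiction that $m \in N$. Then $(x, m) \notin P \oplus N$ forces $x \notin P$. Applying the same test to an arbitrary $m' \in M$ now produces $m' \in N$ or ($x \in P$ and $m' + m \in N$); since $x \notin P$, the second clause fails, so $m' \in N$ for every $m' \in M$, contradicting $N \subsetneq M$. Hence $m \in M \setminus N$, and the surviving alternative of the original dichotomy gives $x \in P$ together with $2m \in N$.

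For part (b), given $m_1 \in M \setminus N$, I would run the same nil-prime test on $(0, m_1)(0, m_1) = (0, 0)$. Since $m_1 \notin N$, the alternative $(0, m_1) \in P \oplus N$ is ruled out, leaving $(x, m_1 + m) \in P \oplus N$, hence $m_1 + m \in N$. Combined with $2m \in N$ from part (a), the identity $2m_1 = 2(m_1 + m) - 2m$ then yields $2m_1 \in N$.

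The main obstacle I anticipate is the opening observation $(x, m) \notin P \oplus N$: without it the nil-prime disjunction is too weak to yield any nontrivial constraint on $m$ or $x$, and the only real leverage for ruling it out is the classification of primes of $R \oplus M$ from \cite{AW09} recalled in the paragraph preceding the statement. Once that reduction is in hand, everything else is a direct unwinding of Definition \ref{1.1} applied to the squares $(0, m')^2 = (0, 0)$.
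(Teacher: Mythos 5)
Your proof is correct, and its engine is the same as the paper's: apply the nil-prime disjunction to the trivial squares $(0,m')(0,m')=(0,0)\in P\oplus N$. Part (b) coincides with the paper's argument essentially verbatim ($m_1+m\in N$ together with $2m\in N$ gives $2m_1\in N$). The only divergence is in how part (a) is organized: you first establish $(x,m)\notin P\oplus N$ by appealing to the classification of prime ideals of $R\oplus M$ from \cite{AW09}, and then rule out $m\in N$ by contradiction. The paper reaches the same conclusions more directly and without that auxiliary fact: since $N$ is proper, pick any $m_1\in M\setminus N$ and test $(0,m_1)^2=(0,0)$; because $(0,m_1)\notin P\oplus N$, the disjunction immediately forces $(x,m_1+m)\in P\oplus N$, i.e.\ $x\in P$ and $m_1+m\in N$, whence $m\notin N$ (otherwise $m_1=(m_1+m)-m\in N$); a second test on $(0,m)^2$ then yields $2m\in N$. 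So the observation you flag as the ``main obstacle'' and the ``only real leverage'' is in fact dispensable --- the mere existence of one element of $M\setminus N$ already does all the work. Your route is sound but pays for an appeal to the structure theory of primes in the idealization (which you could also replace by the cheaper remark that a prime ideal containing $(0,m_1)^2=(0,0)$ must contain $(0,m_1)$); the paper's route is self-contained and shorter.
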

\begin{proof}
(a) Let $P \oplus N$ be a nil-prime ideal of $R \oplus M$ with respect to $(x,m)$ and let $m_1 \in M\setminus N$. Then $(0,m_1)(0,m_1)=(0,0) \in P \oplus N$ implies that 
$(0,m_1) \in P \oplus N$ or $(0,m_1)+(x,m) \in P \oplus N$.  Since $m_1 \not \in N$, we have $(0,m_1)+(x,m) \in P \oplus N$. Thus $x \in P$ and $m_1+m \in N$. As $m_1 \not \in N$, we get that $m\in M\setminus N$.
As, $(0,m)(0,m)=(0,0) \in P \oplus N$ implies that 
$(0,m) \in P \oplus N$ or $(0,m)+(x,m) \in P \oplus N$.  Since $m \not \in N$, we have $(0,m)+(x,m) \in P \oplus N$. Thus $2m \in N$.

(b) Let $m_1 \in M\setminus N$. Then $(0,m_1)(0,m_1)=(0,0) \in P \oplus N$ implies that 
$(0,m_1)+(x,m) \in P \oplus N$. Then $m_1+m \in N$. So,
$m_1+2m-m\in N$. By part (a), $2m\in N$. Thus $m_1-m\in N$. Therefore, $2m_1=m_1+m+m_1-m\in N$.
\end{proof}

\begin{thm}\label{1.15}
Let $M$ be an $R$-module and $P$ be an ideal of $R$. Then we have the following.
\begin{itemize}
\item [(a)] If $N$ is a submodule of $M$ such that $PM \subseteq N$ and $P \oplus N$ is a nil-prime ideal of $R \oplus M$, then $P$ is a nil-prime ideal of $R$.
\item [(b)] If $P$ is a nil-prime ideal of $R$, then $P\oplus M$ is a nil-prime ideal of $R \oplus M$.
\end{itemize}
\end{thm}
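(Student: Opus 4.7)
The plan is to treat parts (a) and (b) separately. For (a) I will pull the nil-prime structure on $P \oplus N$ back through the canonical embedding $R \hookrightarrow R \oplus M$, $a \mapsto (a,0)$; for (b) I will push the nil-prime structure on $P$ forward to $P \oplus M$ by exhibiting an explicit witness. Both directions rely on the paper's identity $Nil(R \oplus M) = Nil(R) \oplus M$, which lets me freely pass witnesses between $R$ and $R \oplus M$.

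For part (a), assume $P \oplus N$ is nil-prime in $R \oplus M$ with respect to some $(x,m) \in Nil(R) \oplus M$. I propose to use $x \in Nil(R)$ itself as the witness for $P$. Given $ab \in P$, I observe that $(a,0)(b,0) = (ab,0) \in P \oplus N$, since $ab \in P$ and $0 \in N$. The nil-prime hypothesis on $P \oplus N$ then forces one of $(a,0)$, $(b,0)$, $(a+x,m)$, or $(b+x,m)$ to lie in $P \oplus N$. Reading off the first coordinate in each of the four cases yields $a \in P$, $b \in P$, $a+x \in P$, or $b+x \in P$, which is precisely the nil-prime condition for $P$ with respect to $x$.

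For part (b), assume $P$ is nil-prime with respect to $x \in Nil(R)$. I propose to use $(x,0) \in Nil(R) \oplus M = Nil(R \oplus M)$ as the witness for $P \oplus M$. Given $(a_1,m_1)(a_2,m_2) = (a_1a_2,\, a_1m_2 + a_2m_1) \in P \oplus M$, the only nontrivial condition is $a_1a_2 \in P$, since the second-coordinate condition is automatic when the target submodule is all of $M$. Applying the nil-prime property of $P$ with respect to $x$ yields one of $a_1 \in P$, $a_2 \in P$, $a_1+x \in P$, or $a_2+x \in P$, and each of these translates directly into $(a_1,m_1)$, $(a_2,m_2)$, $(a_1,m_1)+(x,0)$, or $(a_2,m_2)+(x,0)$ lying in $P \oplus M$.

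I do not expect a genuine obstacle here; the only bookkeeping point worth flagging is the asymmetry between the two parts. In (b) the second-coordinate condition is free because the module component of the ideal is all of $M$, so the witness $(x,0)$ with zero $M$-component suffices. In (a) the witness $(x,m)$ can have a nonzero $m$, but since I evaluate on elements of the form $(a,0)$, only the $R$-part of the witness contributes, and the argument goes through uniformly regardless of whether $N = M$ or $N \subsetneq M$.
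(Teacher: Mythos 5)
Your proposal is correct and follows essentially the same route as the paper: in (a) both arguments evaluate the nil-prime condition of $P \oplus N$ on pairs $(a,0),(b,0)$ and read off the first coordinate of the witness $(x,m)$; in (b) both use the witness $(x,0)$ and note that the module coordinate imposes no constraint since the ideal's module component is all of $M$. No gaps.
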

\begin{proof}
(a) Let $N$ be a submodule of $M$ such that $PM \subseteq N$ and $P \oplus N$ be a nil-prime ideal of $R \oplus M$. Assume that $ab \in P$ for $a, b \in R$. Then
$(a, 0)(b, 0) = (ab, 0) \in P \oplus N$. By assumption, there exists $(x,m) \in Nil(R \oplus M) = Nil(R) \oplus M$ such that $(a, 0) \in P \oplus N$ or $(b, 0) \in P \oplus N$ or $(a, 0)+(x,m) \in P \oplus N$ or $(b, 0)+(x,m) \in P \oplus N$.
Therefore, $x \in  Nil(R)$ and $a \in P$ or $b \in P$ or $a+x \in P$ or $b+x \in P$ as needed.

(b) Let $P$ be a nil-prime ideal of $R$ and $(a, m_1)(b, m_2) \in P \oplus M$. Then $ab \in P$ and by assumption, there exists $x \in  Nil(R)$ such that
$a \in P$ or $b \in P$ or $a+x \in P$ or $b+x \in P$. Hence  $(a, m_1) \in P \oplus N$ or $(b, m_2) \in P \oplus N$ or $(a, m_1)+(x,0) \in P \oplus N$ or $(b, m_2)+(x,0) \in P \oplus N$.
Since $(x,0) \in Nil(R \oplus M)$, we have $P \oplus M$ is a nil-prime ideal of $R \oplus M$.
\end{proof}

\begin{defn}\label{91.193}
We say that two ideals $I$ and $J$ of $R$ are \textit{nil-distinct} if $J \not \subseteq I+Rz$ and $I\not\subseteq J+Rz$ for each $z \in Nil(R)$.
\end{defn}

\begin{lem}\label{27262.1}
Let $P$ be a $\mathfrak{N}$-prime ideal of $R$ and $I_1, I_2,\ldots, I_n$ be ideals of $R$ such that
$I_1 I_2\ldots I_n\subseteq P$. Then $I_i\subseteq P+Nil(R)$ for some i ($1\leq i\leq n$).
\end{lem}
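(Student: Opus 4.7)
The plan is to prove the lemma by strong induction on $n$. The case $n=1$ is immediate, and the case $n=2$ follows directly from the $\mathfrak{N}$-prime property, as follows. Suppose $I_1 I_2 \subseteq P$ and $I_1 \not\subseteq P+Nil(R)$; choose $a \in I_1 \setminus (P+Nil(R))$. For every $b \in I_2$, $ab \in P$, and $\mathfrak{N}$-primality forces $a \in P+Nil(R)$ or $b \in P+Nil(R)$. The first option is excluded by the choice of $a$, so $b \in P+Nil(R)$, whence $I_2 \subseteq P+Nil(R)$.

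For the inductive step, let $n \geq 3$ and assume the statement for all smaller indices. Grouping $I_1\cdots I_n = I_1 \cdot (I_2\cdots I_n) \subseteq P$ and invoking the $n=2$ case, we obtain either $I_1 \subseteq P+Nil(R)$ (which concludes the proof) or $I_2 \cdots I_n \subseteq P+Nil(R)$. The snag in the second alternative is that the product of the remaining ideals sits only inside $P+Nil(R)$ and not inside $P$, so the inductive hypothesis does not apply directly. The remedy is to pass to the reduced quotient $\bar R := R/Nil(R)$ and consider $\bar P := (P+Nil(R))/Nil(R)$; then $\bar{I_2}\cdots \bar{I_n} \subseteq \bar P$, and provided $\bar P$ is a prime ideal of $\bar R$, the classical fact for prime ideals yields $\bar{I_j}\subseteq \bar P$ for some $j\geq 2$, which is exactly $I_j \subseteq P+Nil(R)$.

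The key technical obstacle is therefore the sub-claim that $\bar P$ is prime in $\bar R$, equivalently that $\sqrt{P}=P+Nil(R)$ for an $\mathfrak{N}$-prime ideal $P$. If $\bar x\bar y \in \bar P$, write $xy = p+m$ with $p \in P$ and $m^k = 0$; then a binomial expansion gives $(xy)^k=(p+m)^k \in P$, so $x^k y^k \in P$, and the $\mathfrak{N}$-prime condition yields $x^k \in P+Nil(R)$ or $y^k \in P+Nil(R)$. The remaining implication ``$z^k \in P+Nil(R) \Rightarrow z \in P+Nil(R)$'' is the delicate point of the argument; I would prove it by strong induction on the smallest $N$ with $z^N \in P$, applying $\mathfrak{N}$-primality to $z\cdot z^{N-1}\in P$ and treating the subcase $z^{N-1}\in P+Nil(R)$ by raising to the index of nilpotency of its nilpotent part, which, together with the minimality of $N$, either forces $z \in P+Nil(R)$ directly or returns the problem to a strictly smaller exponent.
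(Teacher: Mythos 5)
Your base case $n=2$ and the reduction of the inductive step to the primality of $(P+Nil(R))/Nil(R)$ in $R/Nil(R)$ are correct, and this is in fact exactly the paper's route: the paper simply quotes \cite[Proposition 2.1]{SU18} for the statement that $P+Nil(R)$ is a prime ideal of $R$ and then applies the classical fact about a product of ideals contained in a prime. The genuine gap is in the sub-claim you yourself single out as delicate, namely that $z^{N}\in P$ (equivalently $z^{k}\in P+Nil(R)$) forces $z\in P+Nil(R)$. Your proposed induction does not close: if $N$ is minimal with $z^{N}\in P$ and $\mathfrak{N}$-primality applied to $z\cdot z^{N-1}\in P$ puts you in the case $z^{N-1}\in P+Nil(R)$, then writing $z^{N-1}=p+m$ with $m^{j}=0$, minimality of $N$ forces $j\ge 2$ (if $j=1$ then $z^{N-1}\in P$), and expanding $(p+m)^{j}$ only yields $z^{(N-1)j}\in P$ with $(N-1)j\ge N$. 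The exponent increases rather than decreases, so the recursion never terminates and no contradiction with the minimality of $N$ is produced.

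Moreover, the sub-claim is not merely resistant to this argument; with the definition of $\mathfrak{N}$-prime reproduced in this paper it is false. Let $R=k[t,\epsilon]/(\epsilon^{2})$ and $P=(t^{2}+\epsilon)$. An element $c_{1}+\epsilon c_{2}$ with $c_{1},c_{2}\in k[t]$ lies in $P$ if and only if $t^{2}\mid c_{1}$ and $c_{2}\equiv c_{1}/t^{2} \pmod{t^{2}}$. If $ab\in P$ with $a=a_{1}+\epsilon a_{2}$, $b=b_{1}+\epsilon b_{2}$ and neither $a_{1}$ nor $b_{1}$ divisible by $t^{2}$, then both $a_{1}$ and $b_{1}$ have $t$-adic valuation exactly $1$, so the $\epsilon$-component $a_{1}b_{2}+a_{2}b_{1}$ of $ab$ vanishes at $t=0$ while $a_{1}b_{1}/t^{2}$ does not; this contradicts the membership criterion, so $P$ is $\mathfrak{N}$-prime. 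Yet $P+Nil(R)=(t^{2},\epsilon)$ contains $t\cdot t$ but not $t$, so it is neither prime nor radical, and $t^{2}\in P+Nil(R)$ while $t\notin P+Nil(R)$ refutes the implication you need. Consequently the inductive step cannot be completed along these lines; the same example indicates that the appeal to \cite[Proposition 2.1]{SU18} in the paper's own one-line proof also deserves scrutiny (either that proposition carries hypotheses not reproduced here, or the notion of $\mathfrak{N}$-prime there is not the one stated in this paper).
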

\begin{proof}
By \cite[Proposition 2.1]{SU18}, $P+Nil(R)$ is a prime ideal of $R$. Thus the result follows from the fact that $I_1 I_2\ldots I_n\subseteq P\subseteq P+Nil(R)$.
\end{proof}

It is well known that in Artinian ring, every prime ideal is a maximal ideal and Artinian ring has only a finite number of maximal ideals \cite{MR3525784}.
The following theorem is a nil-versions of these facts.
\begin{thm}\label{1.176}
Let $R$ be an Artinian ring. Then we have the following.
\begin{itemize}
\item [(a)] If $P$ is a $\mathfrak{N}$-prime ideal of $R$, then $P$ is a $\mathfrak{N}$-maximal ideal of $R$.
\item [(b)] $R$ has only a finite number of nil-maximal ideals which are nil-distinct.
\end{itemize}
\end{thm}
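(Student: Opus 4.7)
For part (a), the plan is to invoke the fact from \cite[Proposition 2.1]{SU18} (already used in Lemma \ref{27262.1}) that $P$ being $\mathfrak{N}$-prime forces $P + Nil(R)$ to be a prime ideal of $R$. This ideal is proper: a relation $1 = p + n$ with $p \in P$ and $n \in Nil(R)$ would make $p = 1 - n$ a unit, contradicting $P \neq R$. Since $R$ is Artinian, every proper prime is maximal, so $P + Nil(R)$ is maximal. For any ideal $I$ with $P \subseteq I \subseteq R$, the chain $P + Nil(R) \subseteq I + Nil(R) \subseteq R$ then forces either $I + Nil(R) = R$ or $I + Nil(R) = P + Nil(R)$; in the second case $I \subseteq P + Nil(R)$. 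Either alternative is precisely what the $\mathfrak{N}$-maximal definition demands.

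For part (b), my strategy is to show that the assignment $P \mapsto P + Nil(R)$ sends any pairwise nil-distinct collection of nil-maximal ideals injectively into the finite set of maximal ideals of $R$. By Theorem \ref{1.5}, every nil-maximal ideal $P$ is $\mathfrak{N}$-prime, so by the argument in (a) the ideal $P + Nil(R)$ is one of the finitely many maximal ideals of $R$. Now suppose $P_1 \neq P_2$ are nil-maximal with $P_1 + Nil(R) = P_2 + Nil(R) = M$, and let $x_i \in Nil(R)$ be a witness for $P_i$ from Definition \ref{1.4}(a). If $P_1 = M$, then $P_2 \subseteq M = P_1 + R\cdot 0$, so the pair is not nil-distinct; the case $P_2 = M$ is symmetric. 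Otherwise $M$ is an ideal of $R$ strictly between $P_i$ and $R$, which by the nil-maximal condition forces $M = P_i + Rx_i$ for both $i$; hence $P_1 \subseteq M = P_2 + Rx_2$, and again the pair is not nil-distinct. Since $R$ has only finitely many maximal ideals, any family of pairwise nil-distinct nil-maximal ideals is finite.

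The only real subtlety lies in (b): one must recognise that the single extra ideal $P + Rx$ permitted by the nil-maximal definition is forced to coincide with $P + Nil(R)$ whenever $P$ is not already a maximal ideal, so that the witness attached to $P_2$ automatically supplies an element of $Nil(R)$ exhibiting $P_1 \subseteq P_2 + Rx_2$. Beyond this observation, the argument rests only on the $\mathfrak{N}$-prime implies $P + Nil(R)$ prime fact from \cite{SU18}, on Theorem \ref{1.5}, and on the finiteness of the maximal spectrum of an Artinian ring.
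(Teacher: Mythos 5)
Your proof is correct, but it takes a genuinely different route from the paper's in both parts. For (a), the paper does not pass through maximality of $P+Nil(R)$ at all: it picks $x\in I$, uses the descending chain $Rx\supseteq Rx^2\supseteq\cdots$ to produce a relation $(1-xr)x^n=0\in P$, and then applies the $\mathfrak{N}$-prime condition together with primeness of $P+Nil(R)$ to conclude $x\in P+Nil(R)$ or $I+Nil(R)=R$; in effect it re-derives the ``prime implies maximal'' phenomenon by hand. You instead quote that $P+Nil(R)$ is a proper prime, hence maximal in an Artinian ring, and read off the $\mathfrak{N}$-maximal condition from the chain $P+Nil(R)\subseteq I+Nil(R)\subseteq R$ — shorter and equally valid, at the cost of leaning on the cited textbook fact rather than being self-contained. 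For (b), the paper mimics the classical minimal-intersection argument: it takes a minimal element $M_1\cap\cdots\cap M_n$ among finite intersections of pairwise nil-distinct nil-maximal ideals, shows via Theorem \ref{1.5} and Lemma \ref{27262.1} that any further nil-distinct nil-maximal $M$ would satisfy $M_i\subseteq M+Nil(R)$, and derives a contradiction from the trichotomy in Definition \ref{1.4}(a). Your injection $P\mapsto P+Nil(R)$ into the finite maximal spectrum avoids Lemma \ref{27262.1} and the product-of-ideals computation entirely; the key observation that when $P_i\neq P_i+Nil(R)$ the nil-maximal trichotomy forces $P_i+Nil(R)=P_i+Rx_i$, so that a common image immediately violates nil-distinctness, is sound, and the degenerate case $P_i=P_i+Nil(R)$ is correctly handled with the witness $z=0$. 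Your argument buys brevity and a clear structural picture (pairwise nil-distinct nil-maximal ideals are separated by their associated maximal ideals), while the paper's buys independence from the classical finiteness theorem, which it instead reproves in this nil setting.
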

\begin{proof}
(a) Let $P$ be a $\mathfrak{N}$-prime ideal of $R$ and $P\subseteq I\subseteq R$ for some ideal $I$ of $R$.
Assume that $x \in I$. Then as $R$ is an Artinian ring, for the following descending chain
$$
Rx \supseteq Rx^2 \supseteq \cdots \supseteq Rx^t \supseteq\cdots
$$
we have $Rx^n=Rx^{n+1}$ for some positive integer $n$. Thus $(1-xr)x^n=0 \in P$. Now since $P$ is $\mathfrak{N}$-prime,
we have  $x^n \in P+Nil(R)$ or $1-rx \in P+Nil(R)$. By \cite[Proposition 2.1]{SU18}, $P+Nil(R)$ is a prime ideal of $R$. Therefore, 
 $x \in P+Nil(R)$ or $I+Nil(R)=R$.  Thus $I\subseteq P+Nil(R)$ or $I+Nil(R)=R$, as needed.
 
(b) Consider the set of all finite intersections $M_1\cap \cdots \cap M_t$, where the $M_i$ are nil-maximal ideals of $R$ which are
nil-distinct.
Since $R$ is Artinian, this set has a minimal element, say $M_1\cap \cdots \cap M_n$. Hence for any nil-maximal ideal $M$ or $R$ which is nil-distinct with $M_i$, we have $M\cap M_1\cap \cdots \cap M_n=M_1\cap \cdots \cap M_n$.
Thus $M_1\cdots  M_n\subseteq M_1\cap \cdots \cap M_n \subseteq M$. By Theorem \ref{1.5}, $M$ is a $\mathfrak{N}$-prime ideal of $R$.
So, by Lemma \ref{27262.1}, $M_i\subseteq M +Nil(R)$ for some $i$.
Now as $M_i$ is a nil-maximal ideal of $R$, we have  $M_i\subseteq M +Nil(R) \subseteq R$,
implies that $M_i=M +Nil(R)$ or $R=M +Nil(R)$ or $M +Nil(R)=M_i+Rx$ for some $x \in Nil(R)$.
If $M_i=M +Nil(R)$, then $M\subseteq M +Nil(R)=M_i\subseteq M_i+Rx$ for each  $x \in Nil(R)$, 
which is a contradiction since $M$ and $M_i$ are nil-distinct.
If $R=M +Nil(R)$, then $1=a+y$ for some $y \in Nil(R)$. It follows that $R=M+Ry$ and so $R=M$, which is a contradiction. If $M +Nil(R)=M_i+Rx$, then for each $a \in M$ we have $a+x=a_i+rx$ for some $a_i \in M_i$ and $r \in R$. This implies that $a=a_i+(1-r)x\subseteq M_i+Rx$ and so $M\subseteq  M_i+Rx$. Which is a desired contradiction because $M$ and $M_i$ are nil-distinct.
\end{proof}

\begin{ex}\label{11.192}
Consider the ideal $P=\langle 16\rangle$ of the ring $R=\Bbb Z_{32}$. Then by Example \ref{1.7}, $P$ is a $\mathfrak{N}$-prime ideal of $R$. As $R$ is an Artinian ring, we have $P$ is a 
$\mathfrak{N}$-maximal ideal of $R$ by Theorem \ref{1.176} (a).
But $P$ is not a nil-maximal ideal of $R$. Because 
$P\subseteq \langle 8\rangle \subseteq R$ and $P\subseteq \langle 4\rangle \subseteq R$. But one can see that there is not $x \in Nil(R)$ such that 
$\langle 4\rangle=P+Rx$ and $\langle 8\rangle=P+Rx$.
\end{ex}

\begin{defn}\label{1.17}
We say that a non-zero ideal $I$ of $R$ is a \textit{nil-minimal ideal} if there exists $x \in Nil(R)$ and whenever $0\subseteq J \subseteq I$ for some ideal $J$ of $R$,  then $I=J+Rx$ or $J=Rx$.
\end{defn}

\begin{defn}\label{222.1}
\begin{itemize}
\item [(a)] We say that an ideal $I$ of $R$ is a \textit{nil-principal ideal} if there exist $r \in R$ and $x \in Nil(R)$  such that $I= Rr+Rx$.
\item [(b)] We say that an ideal $I$ of $R$ is a $\mathfrak{N}$-\textit{principal ideal} if there exists $r \in R$ such that $I\subseteq Rr+Nil(R)$.
\end{itemize}
\end{defn}

Clearly, every nil-principal ideal is a $\mathfrak{N}$-principal ideal.
\begin{thm}\label{1.16}
Let $I$ be a nil-minimal ideal of $R$  such that $I\not \subseteq Nil(R)$. Then we have the followings.
\begin{itemize}
\item [(a)] $I$ is a nil-principal ideal of $R$.
\item [(b)] $(Nil(R):_RI)$ is a maximal ideal of $R$.
\end{itemize}
\end{thm}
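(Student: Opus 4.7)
For part (a), I would begin by choosing an element $a\in I\setminus Nil(R)$, which exists by the assumption $I\not\subseteq Nil(R)$. The natural move is to apply the nil-minimality of $I$ to the principal ideal $J=Ra\subseteq I$: there exists $x\in Nil(R)$ (the fixed witness from Definition \ref{1.17}) such that either $I=Ra+Rx$ or $Ra=Rx$. The second alternative forces $a\in Rx$; but every element of $Rx$ is nilpotent (because $(rx)^n=r^n x^n=0$ if $x^n=0$), which contradicts $a\notin Nil(R)$. Hence $I=Ra+Rx$, and this is exactly the definition of a nil-principal ideal.

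For part (b), write $\mathfrak{m}=(Nil(R):_R I)$. I would first observe that $\mathfrak{m}$ is a proper ideal, because $1\cdot I=I\not\subseteq Nil(R)$. Next, using $I=Ra+Rx$ from part (a) together with $Rx\subseteq Nil(R)$, the colon collapses:
\[
\mathfrak{m}=(Nil(R):_R Ra+Rx)=(Nil(R):_R Ra)=(Nil(R):_R a),
\]
since $sRx\subseteq Rx\subseteq Nil(R)$ automatically for every $s\in R$.

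The key step is to show $\mathfrak{m}$ is maximal by proving that for every $r\in R\setminus\mathfrak{m}$ one has $Rr+\mathfrak{m}=R$. Fix such an $r$; then $ra\notin Nil(R)$, in particular $ra\neq 0$. Applying the nil-minimality of $I$ to $J=Rra\subseteq I$ gives $I=Rra+Rx$ or $Rra=Rx$; the latter forces $ra\in Nil(R)$, a contradiction, so $I=Rra+Rx$. In particular $a\in Rra+Rx$, so there exist $s,t\in R$ with $a=sra+tx$, i.e.\ $(1-sr)a=tx\in Nil(R)$. Thus $1-sr\in(Nil(R):_R a)=\mathfrak{m}$, and consequently
\[
1=sr+(1-sr)\in Rr+\mathfrak{m},
\]
so $Rr+\mathfrak{m}=R$, proving $\mathfrak{m}$ is maximal.

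The main conceptual obstacle is recognizing the reduction $(Nil(R):_R I)=(Nil(R):_R a)$ for the generator $a$ produced in part (a); once this is in place, the minimality condition applied to $Rra$ yields the needed expression of $1$ as a combination of $r$ and an element of the colon ideal by a standard determinant-trick style manipulation. No auxiliary machinery is needed beyond the definition of nil-minimal and the elementary fact that $Rx$ consists of nilpotents whenever $x$ is nilpotent.
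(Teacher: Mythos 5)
Your proof is correct and follows essentially the same route as the paper: part (a) is identical, and in part (b) both arguments apply nil-minimality to a multiple of $I$ inside $I$ and then express the generator $a$ as a combination to extract an element $1-sr$ (the paper's $1-j_1$) lying in the colon ideal. The only cosmetic difference is that you work with the principal ideal $Rra$ for a single $r\notin(Nil(R):_R I)$ after reducing to $(Nil(R):_R I)=(Nil(R):_R a)$, whereas the paper takes an arbitrary ideal $J\supseteq(Nil(R):_R I)$, applies nil-minimality to $IJ$, and handles the extra case $IJ=Ry$ separately.
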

\begin{proof}
(a) Let $a \in I\setminus Nil(R)$. Then $0\subseteq Ra\subseteq I$ implies that there exists $x \in Nil(R)$ such that $I=Ra+Rx$ or $Ra=Rx$. Since $a \in I\setminus Nil(R)$, we have $I=Ra+Rx$.

(b) Let $I\not \subseteq Nil(R)$. Then $(Nil(R):_RI)$ is a proper ideal of $R$. Suppose that $(Nil(R):_RI)\subseteq J\subseteq R$. Then $0\subseteq IJ\subseteq I$ implies that $I=IJ+Ry$  or $IJ=Ry$ for $y \in Nil(R)$ because $I$ is nil-minimal ideal.
 If $IJ=Ry$, then since $Ry\subseteq Nil(R)$, we have $J\subseteq (Nil(R):_RI)$. Thus $J=(Nil(R):_RI)$ and we are done.  So assume that $IJ\not=Ry$. By part (a),  $I=Ra+Rx$ for some $a \in I\setminus Nil(R)$ and $x \in Nil(R)$. Therefore, $a \in I=aJ+xJ+yR$. Thus
$a=aj_1+xj_2+sy$ for some $j_1, j_2 \in J$ and $s \in R$. It follows that $(1-j_1)a \in Nil(R)$. Hence, $(1-j_1)I=R(1-j_1)a+R(1-j_1)x\subseteq Nil(R)$.
Thus $1-j_1 \in (Nil(R):_RI)\subseteq J$ and so $1\in J$. Hence $J=R$, as needed.
\end{proof}

\begin{defn}\label{2262.1}
We say that a commutative ring $R$ is a $\mathfrak{N}$-\textit{integral domain} if the zero ideal of $R$ is a $\mathfrak{N}$-prime ideal of $R$.
\end{defn}

\begin{defn}\label{2262.1}
We say that a nil integral domain $R$ is a $\mathfrak{N}$-\textit{PID} if every ideal of $R$ is a $\mathfrak{N}$-principal ideal of $R$.
\end{defn}

Let $R$ be an integral domain. It is well known that $R$ is a PID if and only if each prime
ideal of $R$ is principal. The following theorem is a $\mathfrak{N}$-version of this fact.

\begin{thm}\label{1.16}
Let $R$ be a $\mathfrak{N}$-integral domain. Then $R$ is a $\mathfrak{N}$-PID if and only if each $\mathfrak{N}$-prime
ideal of $R$ is $\mathfrak{N}$-principal.
\end{thm}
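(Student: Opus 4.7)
The plan is to adapt the classical argument that an integral domain is a PID iff each of its prime ideals is principal, replacing \textit{principal} by \textit{$\mathfrak{N}$-principal} and \textit{prime} by \textit{$\mathfrak{N}$-prime} throughout, and carefully tracking the nilpotent correction terms. The forward direction is immediate from the definitions. For the reverse direction, I would assume every $\mathfrak{N}$-prime ideal of $R$ is $\mathfrak{N}$-principal and let $\mathcal{S}$ denote the collection of ideals of $R$ that are not $\mathfrak{N}$-principal.

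First I would verify that $\mathcal{S}$ is closed under unions of chains: any subideal of $Rr + Nil(R)$ is itself contained in $Rr + Nil(R)$, so if the union of a chain in $\mathcal{S}$ were $\mathfrak{N}$-principal, each member of the chain would be $\mathfrak{N}$-principal too. Hence, assuming $\mathcal{S} \neq \emptyset$, Zorn's lemma yields a maximal element $P \in \mathcal{S}$. The rest of the argument is to show that $P$ is $\mathfrak{N}$-prime, which produces the desired contradiction because by hypothesis $P$ would then be $\mathfrak{N}$-principal.

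To see $P$ is $\mathfrak{N}$-prime, suppose $ab \in P$ with $a, b \notin P + Nil(R)$. Since $a \notin P$, we have $P \subsetneq P + Ra$, and by maximality of $P$ there exists $c \in R$ with $P + Ra \subseteq Rc + Nil(R)$; in particular $a = tc + n'$ for some $t \in R$ and $n' \in Nil(R)$. Next I would form the residual
$$J = \{r \in R : ra \in P + Nil(R)\},$$
which contains $P + Nil(R)$ and, since $ab \in P$, also contains $b$. Because $b \notin P + Nil(R)$, we get $P \subsetneq J$, so maximality of $P$ gives $e \in R$ with $J \subseteq Re + Nil(R)$. To finish, take any $p \in P \subseteq Rc + Nil(R)$ and write $p = rc + n$ with $n \in Nil(R)$; then
$$ra = t(rc) + rn' = t(p - n) + rn' \in P + Nil(R),$$
so $r \in J$, hence $r = se + n''$ for some $s \in R$ and $n'' \in Nil(R)$, and therefore $p = s(ec) + (n''c + n) \in R(ec) + Nil(R)$. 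This forces $P \subseteq R(ec) + Nil(R)$, contradicting $P \in \mathcal{S}$.

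The main obstacle is that \emph{$\mathfrak{N}$-principal} is defined as a containment rather than an equality, so the classical identity $I = cJ$ with $J = (I :_R c)$ does not transfer directly. The workaround is to take the residual against $a$ rather than against $c$, exploiting that $a$ (unlike $c$) is guaranteed to lie in $Rc + Nil(R)$; once $J$ is placed inside $Re + Nil(R)$, the product $ec$ serves as the common $\mathfrak{N}$-principal generator for $P$. The bookkeeping of nilpotent terms at the final step only goes through because $Nil(R)$ is an ideal, so products such as $n''c$ and $rn'$ remain in $Nil(R)$.
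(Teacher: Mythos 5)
Your proof is correct and follows essentially the same route as the paper: Zorn's lemma on the set of non-$\mathfrak{N}$-principal ideals, followed by the Kaplansky-style colon-ideal argument showing the maximal element is $\mathfrak{N}$-prime, with nilpotent correction terms absorbed into $Nil(R)$. The only (cosmetic) difference is that you take the residual $(P+Nil(R):_R a)$ against the single element $a$ where the paper uses $(\mathfrak{M}+Nil(R):_R \mathfrak{M}+Rx_1)$ — these coincide since $\mathfrak{M}$ lies in the colon automatically — and your bookkeeping of the nilpotent terms is, if anything, slightly cleaner.
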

\begin{proof}
The direct implication follows directly from the definition.

For the reverse implication, suppose that every nil-prime ideal of $R$ is $\mathfrak{N}$-principal. Assume,
by way of contradiction, that $R$ is not a $\mathfrak{N}$-PID, and so that there is an ideal of $R$ that is
not $\mathfrak{N}$-principal. Then the set $\Omega$ consisting of all non-$\mathfrak{N}$-principal ideals of $R$ is a non-empty
partially ordered set. Suppose that $\{I_{\gamma} : \gamma \in  \Gamma \}$ is a chain in $\Omega$. It is not hard to
verify that $I :=\cup_{\gamma \in  \Gamma}I_{\gamma}$
is a non-$\mathfrak{N}$-principal ideal of $R$ and, therefore, an upper bound
for the given chain. Then $\Omega$ contains a maximal element $\mathfrak{M}$ by Zorn’s lemma.
Since $\mathfrak{M}$ is not $\mathfrak{N}$-principal, it cannot be nil-prime. Thus, there exist $x_1, x_2 \in  R$ such
that $x_1x_2 \in \mathfrak{M}$ and $x_1+y\not \in \mathfrak{M}$, $x_2+y\not \in \mathfrak{M}$ for each $y \in Nil(R)$. Since the ideals $I_1:= \mathfrak{M} + Rx_1$ and $I_2 := \mathfrak{M} +Rx_2$ properly contain $\mathfrak{M}$,
the maximality of $\mathfrak{M}$ in $\Omega$ guarantees the existence of $a \in  R$ such that $I_1 \subseteq Ra+Nil(R)$. Define
$$
K := (\mathfrak{M}+Nil(R):_R I_1) = \{r \in R : rI_1 \subseteq \mathfrak{M}+Nil(R)\}.
$$
As $I_1I_2=\mathfrak{M}^2+x_2\mathfrak{M}+x_1\mathfrak{M}+Rx_1x_2\subseteq \mathfrak{M}\subseteq \mathfrak{M}+Nil(R)$, we have $I_2\subseteq K$.
This implies that $\mathfrak{M} \subset K$.
So $K$ must be $\mathfrak{N}$-principal, and we can take $b \in R$ such that $K \subseteq bR+Nil(R)$.
Let  $c\in  \mathfrak{M}$. Since $\mathfrak{M} \subseteq I_1$, we can write $c = ra+z$ for some
$r \in R$ and $z \in Nil(R)$. If $t \in rI_1\subseteq Rar+rNil(R)$, then for some $s \in R$ and $z_1 \in Nil(R)$ we have
$$
 t=sar+rz_1=sar+sz-sz+rz_1=sc-sz+rz_1\in \mathfrak{M}+Nil(R).
$$
 Thus $rI_1\subseteq \mathfrak{M}+Nil(R)$.
 It follows that $r\in  K$. Hence $r=br_1+w$ for some $r_1 \in R$ and $w\in Nil(R)$. Therefore,
$$
c= ra+z=(br_1+w)a+z=bar_1+wa+z\in Rab+Nil(R).
$$
 So we have $\mathfrak{M}\subseteq Rab+Nil(R)$, which
contradicting the fact that $\mathfrak{M}$ belongs to $\Omega$. Therefore, $R$ is a $\mathfrak{N}$-PID.
\end{proof}
{\bf Acknowledgement.}
The author would like to thank Professor Hani A. Khashan for his helpful suggestions and useful comments.

\end{document}